\newtheorem{theorem}{Theorem}[section]
\theoremstyle{definition}
\theoremstyle{remark}
\newcommand{\Rel}[1]{\mbox{\mbox{Rel}$\left( #1;q \right)$}}
\newcommand{\Relp}[1]{\mbox{\mbox{Rel}$\left( #1;p \right)$}}
\newcommand{\Relpnum}[2]{\mbox{\mbox{Rel}$\left( #1;#2 \right)$}}
\newcommand{\spRel}[1]{\mbox{\mbox{spRel}$(#1;p)$}}
\newcommand{\TRelp}[3]{\mbox{\mbox{Rel}$_{#2,#3}(#1;p)$}}
\newcommand{\TRelpnum}[4]{\mbox{\mbox{Rel}$_{#2,#3}(#1;#4)$}}
\newtheorem{prob}{Problem}[section]
\begin{document}

\title{Roots of Two-Terminal Reliability}
\author{Jason Brown and Corey D. C. DeGagn\'e \\ Dalhousie University}

\maketitle

%\noindent
%\textit{Corresponding Author:} Jason Brown \\
%\textit{Email:} brown@mathstat.dal.ca \\
%\textit{Telephone:} (902) 494-7063\\
%\textit{Fax:} (902) 494-5130 \\
%
%\noindent
%Department of Mathematics and Statistics\\
%Dalhousie University\\
%6316 Coburg Road\\
%PO BOX 15000\\
%Halifax, Nova Scotia, Canada B3H 4R2
%
%\clearpage

\begin{abstract}
Assume that the vertices of a graph $G$ are always operational, but the edges of $G$ are operational independently with probability $p \in[0,1]$. For fixed vertices $s$ and $t$, the \emph{two-terminal reliability} of $G$ is the probability that the operational subgraph contains an $(s,t)$-path, while the \emph{all-terminal reliability} of $G$ is the probability that the operational subgraph contains a spanning tree. Both reliabilities are polynomials in $p$, and have very similar behaviour in many respects. However, unlike all-terminal reliability, little is known about the roots of two-reliability polynomials.  In a variety of ways, we shall show that the nature and location of the roots of two-terminal reliability polynomials have significantly different properties than those held by roots of the all-terminal reliability.\\

%\noindent \textit{Keywords}: graph, two-terminal reliability, polynomial, root, fractal, attractor
\end{abstract}

\section{Introduction}
There are a variety of probabilistic models of network robustness, but the two most common in the literature are  \textbf{all-terminal reliability} and \textbf{two-terminal reliability}. Suppose that $G$ is a finite, undirected graph on vertex set $V$ (we shall assume, unless otherwise noted, all graphs under discussion are connected). Suppose further that each edge is independently operational (or ``up'') with probability $p \in [0,1]$.  The \textbf{all-terminal reliability} of $G$, $\Relp{G}$, is the probability that in the spanning subgraph of operational edges, all vertices can communicate (this is equivalent to the spanning subgraph of operational edges containing a spanning tree of $G$). If $s$ and $t$ are vertices of $G$, then the  \textbf{two-terminal reliability} of $G$ (or more precisely, of $(G,s,t)$), $\TRelp{G}{s}{t}$, is the probability that in the spanning subgraph of operational edges, $s$ and $t$ can communicate (this is equivalent to the spanning subgraph of operational edges containing an $s$-$t$ path). Both all-terminal and two-terminal reliabilities on graphs are instances of a more general \textbf{$K$-terminal reliability} of a graph, where a set of $K$ vertices are chosen and one is interested in the probability that all of those vertices can communicate with one another; the all-terminal and two-terminal reliabilities are the two extremes of the problem (where $K$ is either all the vertices or just two of them).

The research into all-terminal reliability and two-terminal reliability is vast (see, for example, \cite{colbook}), and what is striking is how many results for one has a similar analogue for the other. Both are well known to be polynomial functions with the same general shape in the interval:
\begin{itemize}
\item They are increasing on the interval $[0,1]$.
\item They cross the line $y = p$ at most once as long as the function is different from $p$. 
\item The functions are, in general, {\em $S$-shaped}, that is they are less than $p$ in some neighborhood of $0$, greater than $p$ in some neighborhood of $1$, and there is exactly one fixed point (this holds for all-terminal reliability whenever the graph is connected, has at least two vertices  and is not an edge, while for two-terminal reliability, this holds provided either (i) there is an edge between $s$ and $t$ and $G - st$ contains an $s$-$t$ path, or (ii) there is no edge between $s$ and $t$ and there is no edge whose removal separates $s$ and $t$). 
\end{itemize}
Both all-terminal and two-terminal reliabilities have underlying \textbf{coherent systems}, that is, collections of subsets of a set (namely the operational edge sets of the graph) that are closed upwards under containment.

The research for both all-terminal and two-terminal reliabilities have followed parallel tracks. Both can be calculated via \textbf{Factor theorems}, which are a recursion based on Bayes' Theorem:
\[ \Relp{G} = p\cdot \Relp{G\bullet e} + (1-p) \cdot \Relp{G - e}\]
\[ \TRelp{G}{s}{t} = p\cdot \TRelp{G\bullet e}{s}{t} + (1-p) \cdot \TRelp{G - e}{s}{t}\]
(Here $\bullet$ and $-$ denote the contraction and deletion of the edge $e$ in $G$, respectively.) Classes of graphs (complete graphs, series-parallel graphs, and so on) for which formulas are known for one of the two are almost always known for the other, with similar formulas. The problems of determining all-terminal and two-terminal reliabilities exactly are both known to be intractable ($\#P$-complete), and many of the same techniques (such as Sperner and Kruskal-Katona bounds) have been utilized to produce efficient algorithms to estimate the functions.

Given that all-terminal reliability is always a polynomial, one could naturally ask questions about the nature and location of the roots, and there have been a number of papers along these lines over the past three decades \cite{brownColbournRoots,brownColbournLogConcave,BM1,royleSokal,wagner}. The \textbf{all-terminal roots} (the roots of all-terminal reliability polynomials) have shown considerable structure, especially with respect to the unit disk $D_{1}(1) \equiv \{z \in \mathbb{C}: |z-1| \leq 1$\}, where they were conjectured in 1992 to be (and only disproved by the smallest of margins in \cite{royleSokal}).
\begin{itemize}
\item The real all-terminal roots are in $\{0\} \cup (1,2]$. \cite{brownColbournRoots}
\item The closure of the all-terminal roots includes the disk $D_{1}(1)$ \cite{brownColbournRoots}.  (While there are some roots outside the disk \cite{royleSokal}, it is not known whether the closure of the all-terminal roots contain any set of positive measure outside the disk. Currently \cite{BM1}, the root of furthest away from $z = 1$ has distance approximately $1.13$.)
\item Every (connected) graph has a subdivision whose roots are all real.  \cite{brownColbournLogConcave}
\item Every (connected) graph has a subdivision whose roots lie in the disk $D_{1}(1)$. \cite{brownColbournLogConcave}
\end{itemize} 

\begin{figure}
    \centering
    \includegraphics[width=3.8in]{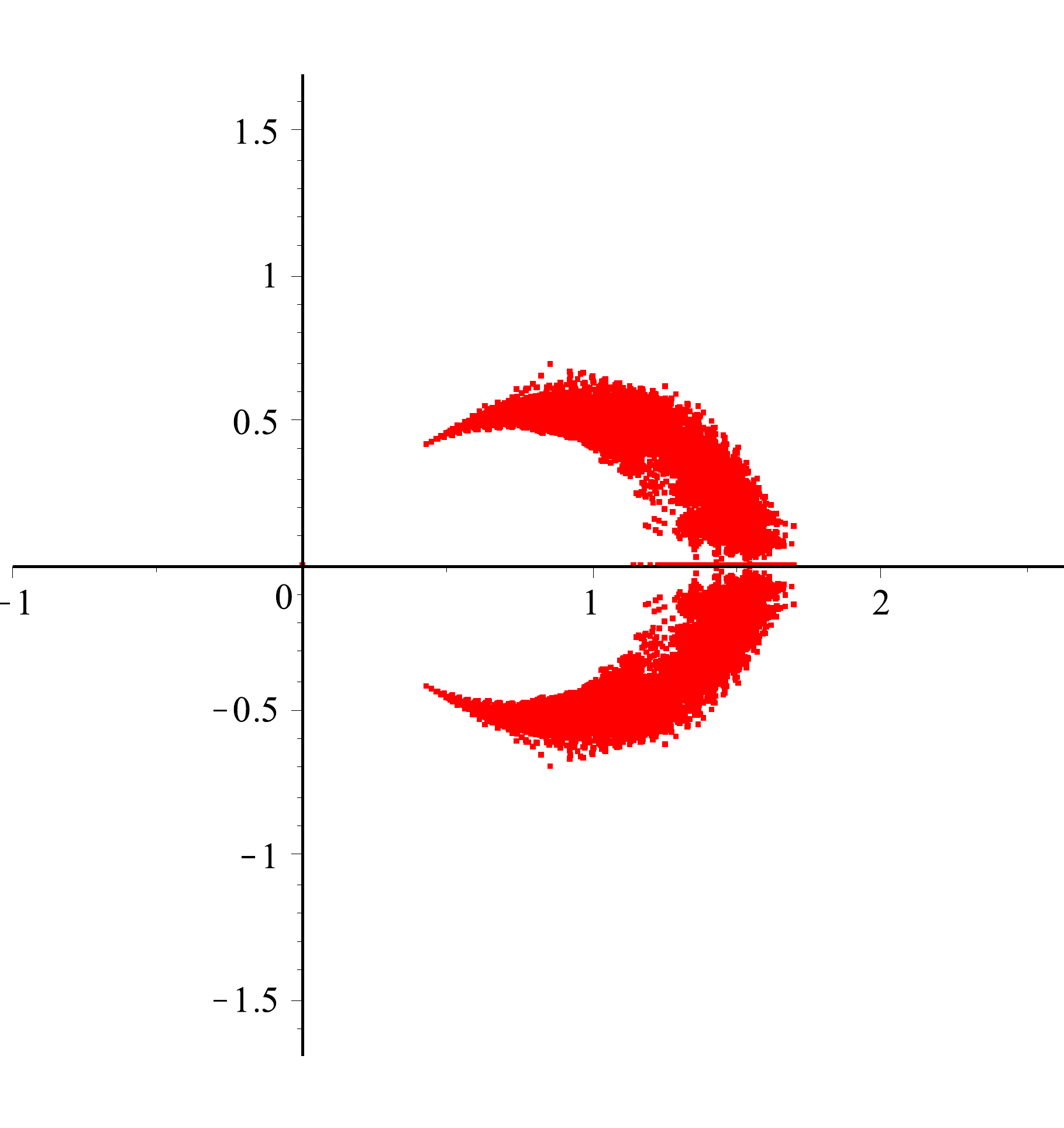}
    \caption{Plot of all-terminal roots of graphs of order $8$ (that is, with $8$ vertices).}
    \label{fig:All-Term-Roots-Order8}
\end{figure}

Surprisingly, very little is known about \textbf{two-terminal roots}, the roots of two-terminal reliability polynomials. Tungay \cite{T1,T2,T3,T4} provided a few families of two-terminal reliabilities, with all of the roots lying in the box 
\[ \{z \in \mathbb{C} : -0.8 < \Re(z) < 1.6,~ -0.65  < \Im(z) < 0.65 \}.\]
In light of how much all-terminal and two-reliabilities have in common, one might expect that their roots share similar features.
However, a plot of the roots of two-terminal reliabilities for simple graphs of small order (see Figure~\ref{fig:Two-Term-Roots-Order7}) show much different structure than that for all-terminal reliability (see Figure~\ref{fig:All-Term-Roots-Order8}), and in this paper we will delve into some of the ways in which they do indeed differ significantly.
\begin{figure}
    \centering
    \includegraphics[width=4.0in]{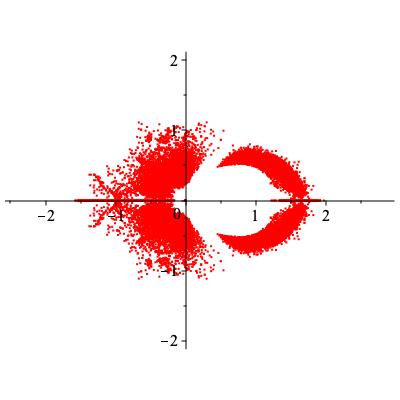}
    \caption{Plot of two-terminal roots of graphs of order $7$}
    \label{fig:Two-Term-Roots-Order7}
\end{figure}

We remark that our graphs allow for multiple edges, but not loops (as loops do not affect either  the all-terminal or two-terminal reliability). The \textbf{order} and \textbf{size} of a a graph denote the cardinality of the set of vertices and edges, respectively.

\section{Real Two-Terminal Roots}

One of the fundamental results on all-terminal roots is that every graph has a subdivision whose all-terminal roots are all real.  Such is not the case for two-terminal reliability. For example,  note that every subdivision of a cycle is again a cycle, so it suffices to consider  the real roots of cycles.  Note that if $s$ and $t$ are two vertices of the cycle $C_n$ at distance $k$, then
\[ \TRelp{C_n}{s}{t} = p^k+p^{n-k}-p^n,\]
as $s$ and $t$ can communicate if and only if at least one  of the two paths (of lengths $k$ and $n-k$) are operational. 
The well-known {\em Descartes' Rule of Signs} states that for a real polynomial, the number of positive roots (counting multiplicities) is at most the number of sign changes in the coefficients. As $\TRelp{C_n}{s}{t}$ has exactly one sign change, it has at most one positive root. By considering $\TRelpnum{C_n}{s}{t}{-p}$, one can see that $\TRelp{C_n}{s}{t}$ has at most $2$ negative roots. It follows that $\TRelp{C_n}{s}{t}$ has at most $k + 1 + 2 = k+3$  real roots. As $k \leq n/2$, we see that no matter what choice of $s$ and $t$ we make, it follows that no subdivision of a cycle of order at least $7$ has all real two-terminal roots (in fact, by direct calculations, one can replace $7$ by $5$, and even $C_4$, with adjacent terminals, has no subdivision for which the two-terminal roots are all real).

As noted earlier, the real all-terminal roots all lie in the set $\{0\} \cup (1,2]$. However, from Figure~\ref{fig:Two-Term-Roots-Order7}), it appears that there are negative two-terminal roots. Indeed this is the case. Consider the theta graph $\Theta_{l[k]}$, consisting of two vertices, $s$ and $t$, joined by $k$ internally disjoint paths, each of length $l$. Its two-terminal reliability is given by
\begin{equation}
	\label{eqn:TwoTerm-Real-GenTheta}
	\TRelp{\Theta^{l[k]}}{s}{t} = 1 - (1 - p^l)^k,
\end{equation}
as $s$ and $t$ can communicate if and only if at least one of the $k$ paths of length $l$ is operational.
If $l$ is even, we have $\TRelpnum{\Theta_{l[k]}}{s}{t}{-1} = 1 > 0$.  Now, for $\varepsilon > 0 $, what is the sign of $\TRelpnum{\Theta_{l[k]}}{s}{t}{-1 - \varepsilon}$? 
Let us choose $l$ even and large enough so that $\epsilon > 2^{\frac{1}{l}} - 1$. Then
$$1 - (-1 - \epsilon)^l  = 1 - (1 + \epsilon)^l < -1.$$
Then for any even $k$, $\TRelpnum{\Theta_{l[k]}}{s}{t}{-1 - \varepsilon} < 0$, and so by 
the Intermediate Value Theorem, there must be a real root in the interval $(-1 - \varepsilon, -1)$.
%\begin{figure}[!h]
%    \centering
%    \includegraphics[width=3.0in]{TwoTerm-Real-GTheta.eps}
%    \caption{Plot of the real two-terminal roots of $\Theta_{l[k]}$ with $l$ and $k$ ranging between $1$ and $10$.}
%    \label{fig:TwoTerm-Real-GTheta}
%\end{figure}

There can be roots even further to the left of $-1$. For example, for the four-cycle $C_{4}$ with nonadjacent terminals $s$ and $t$,
\[ \TRelp{C_4}{s}{t}  = 2p^{2}-p^{4} = p^{2}(2-p^{2}).\]
This places a two-terminal root at $-\sqrt{2} \approx -1.41$ (and at $\sqrt{2}$).

We can even move the roots further out to the left, but to do so, we'll need to introduce a general type of graph operation (see \cite{BM1,cox}). Suppose that $H$ is a graph with two distinct distinguished vertices $u$ and $v$ (we'll refer to $(H,u,v)$ as a \textbf{gadget}). Then for any graph $G$, an \textbf{edge substitution} of $H(u,v)$ into $G,$ denoted by $G[H(u,v)],$ is any graph formed by replacing each edge $\{x,y\}\in E(G)$ by a copy $H_{\{x,y\}}$ of $H,$ identifying vertices $u$ with $x$ and $v$ with $y$  (we assume that $G$ and all of the copies of $H$ are disjoint). We also call $G[H(u,v)]$ a \textbf{gadget replacement} of $G$ with $H$. An example is illustrated in Figure~\ref{fig:twoterm-root-gadgetreplacement}. 

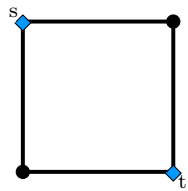
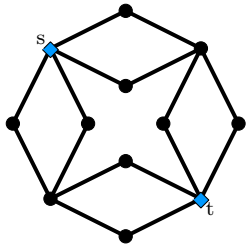
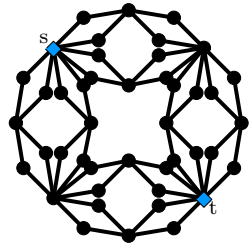
\begin{figure}
	\centering
	\definecolor{qqzzff}{rgb}{0,0.6,1}
	\begin{minipage}{0.45\linewidth}
	\begin{tikzpicture}[line cap=round,line join=round,>=triangle 45,x=.5cm,y=.5cm]
\clip(-5.2743904128230197,-4.249041386132193) rectangle (2.7320571278323365,2.9366380869399595);
\draw [line width=1.5pt] (-2,2)-- (2,2);
\draw [line width=1.5pt] (2,-2)-- (2,2);
\draw [line width=1.5pt] (2,-2)-- (-2,-2);
\draw [line width=1.5pt] (-2,-2)-- (-2,2);
\begin{scriptsize}
\draw [fill=qqzzff] (-2,2) ++(-2.5pt,0 pt) -- ++(2.5pt,2.5pt)--++(3pt,-3pt)--++(-3pt,-3pt)--++(-3pt,3pt);
\draw[color=black] (-2.25,2.25) node {s};
\draw[color=black] (2.25,-2.25) node {t};
\draw [fill=black] (2,2) circle (2.5pt);
\draw [fill=black] (-2,-2) circle (2.5pt);
\draw [fill=qqzzff] (2,-2) ++(-2.5pt,0 pt) -- ++(2.5pt,2.5pt)--++(3pt,-3pt)--++(-3pt,-3pt)--++(-3pt,3pt);
\end{scriptsize}
\end{tikzpicture}
	\subcaption{Initial graph $C_4$ with antipodal terminals $s$ and $t$.}
	\end{minipage} \\
	\begin{minipage}{0.45\linewidth}
	\begin{tikzpicture}[line cap=round,line join=round,>=triangle 45,x=.5cm,y=.5cm]
\clip(-4.348056695176196,-3.508556258653804) rectangle (5.67274717622092,3.654603923181391);
\draw [line width=1.5pt] (-2,2)-- (0,3);
\draw [line width=1.5pt] (0,3)-- (2,2);
\draw [line width=1.5pt] (2,2)-- (0,1);
\draw [line width=1.5pt] (0,1)-- (-2,2);
\draw [line width=1.5pt] (2,2)-- (1,0);
\draw [line width=1.5pt] (1,0)-- (2,-2);
\draw [line width=1.5pt] (2,-2)-- (3,0);
\draw [line width=1.5pt] (3,0)-- (2,2);
\draw [line width=1.5pt] (-2,-2)-- (0,-1);
\draw [line width=1.5pt] (0,-1)-- (2,-2);
\draw [line width=1.5pt] (2,-2)-- (0,-3);
\draw [line width=1.5pt] (0,-3)-- (-2,-2);
\draw [line width=1.5pt] (-2,-2)-- (-1,0);
\draw [line width=1.5pt] (-1,0)-- (-2,2);
\draw [line width=1.5pt] (-2,2)-- (-3,0);
\draw [line width=1.5pt] (-3,0)-- (-2,-2);
\begin{scriptsize}
\draw [fill=qqzzff] (-2,2) ++(-2.5pt,0 pt) -- ++(2.5pt,2.5pt)--++(3pt,-3pt)--++(-3pt,-3pt)--++(-3pt,3pt);
\draw [fill=black] (2,2) circle (2.5pt);
\draw [fill=qqzzff] (2,-2) ++(-2.5pt,0 pt) -- ++(2.5pt,2.5pt)--++(3pt,-3pt)--++(-3pt,-3pt)--++(-3pt,3pt);
\draw [fill=black] (-2,-2) circle (2.5pt);
\draw [fill=black] (0,3) circle (2.5pt);
\draw [fill=black] (0,1) circle (2.5pt);
\draw [fill=black] (1,0) circle (2.5pt);
\draw [fill=black] (3,0) circle (2.5pt);
\draw [fill=black] (0,-1) circle (2.5pt);
\draw [fill=black] (0,-3) circle (2.5pt);
\draw [fill=black] (-1,0) circle (2.5pt);
\draw [fill=black] (-3,0) circle (2.5pt);
\draw[color=black] (-2.25,2.25) node {s};
\draw[color=black] (2.25,-2.25) node {t};
\end{scriptsize}
\end{tikzpicture}
	\subcaption{First iteration of the gadget replacement, $C_4[C_4(s,t)]$}
	\end{minipage}\hfill
	\begin{minipage}{0.45\linewidth}
	\begin{tikzpicture}[line cap=round,line join=round,>=triangle 45,x=.5cm,y=.5cm]
\clip(-4.266767596500207,-3.6453027753876723) rectangle (5.792467427287507,3.545329104734388);
\draw [line width=1.5pt] (-2,2)-- (-0.8,2.2);
\draw [line width=1.5pt] (-0.8,2.2)-- (0,3);
\draw [line width=1.5pt] (0,3)-- (0.8,2.2);
\draw [line width=1.5pt] (0.8,2.2)-- (2,2);
\draw [line width=1.5pt] (2,2)-- (0.8,1.8);
\draw [line width=1.5pt] (0.8,1.8)-- (0,1);
\draw [line width=1.5pt] (0,1)-- (1,1.2);
\draw [line width=1.5pt] (1,1.2)-- (2,2);
\draw [line width=1.5pt] (-2,2)-- (-0.8,1.8);
\draw [line width=1.5pt] (-0.8,1.8)-- (0,1);
\draw [line width=1.5pt] (0,1)-- (-1,1.2);
\draw [line width=1.5pt] (-1,1.2)-- (-2,2);
\draw [line width=1.5pt] (-2,2)-- (-1.2,2.8);
\draw [line width=1.5pt] (-1.2,2.8)-- (0,3);
\draw [line width=1.5pt] (0,3)-- (1.2,2.8);
\draw [line width=1.5pt] (1.2,2.8)-- (2,2);
\draw [line width=1.5pt] (0,-1)-- (-1,-1.2);
\draw [line width=1.5pt] (0,-1)-- (1,-1.2);
\draw [line width=1.5pt] (-1,-1.2)-- (-2,-2);
\draw [line width=1.5pt] (1,-1.2)-- (2,-2);
\draw [line width=1.5pt] (0,-1)-- (0.8,-1.8);
\draw [line width=1.5pt] (0.8,-1.8)-- (2,-2);
\draw [line width=1.5pt] (-2,-2)-- (-0.8,-1.8);
\draw [line width=1.5pt] (-0.8,-1.8)-- (0,-1);
\draw [line width=1.5pt] (-2,-2)-- (-0.8,-2.2);
\draw [line width=1.5pt] (-0.8,-2.2)-- (0,-3);
\draw [line width=1.5pt] (0,-3)-- (0.8,-2.2);
\draw [line width=1.5pt] (0.8,-2.2)-- (2,-2);
\draw [line width=1.5pt] (-2,-2)-- (-1.2,-2.8);
\draw [line width=1.5pt] (-1.2,-2.8)-- (0,-3);
\draw [line width=1.5pt] (0,-3)-- (1.2,-2.8);
\draw [line width=1.5pt] (1.2,-2.8)-- (2,-2);
\draw [line width=1.5pt] (-1,0)-- (-1.2,1);
\draw [line width=1.5pt] (-1.2,1)-- (-2,2);
\draw [line width=1.5pt] (-1,0)-- (-1.8,0.8);
\draw [line width=1.5pt] (-1.8,0.8)-- (-2,2);
\draw [line width=1.5pt] (1,0)-- (1.2,1);
\draw [line width=1.5pt] (1.2,1)-- (2,2);
\draw [line width=1.5pt] (1,0)-- (1.8,0.8);
\draw [line width=1.5pt] (1.8,0.8)-- (2,2);
\draw [line width=1.5pt] (-2,-2)-- (-1.2,-1);
\draw [line width=1.5pt] (-1.2,-1)-- (-1,0);
\draw [line width=1.5pt] (2,-2)-- (1.2,-1);
\draw [line width=1.5pt] (1.2,-1)-- (1,0);
\draw [line width=1.5pt] (-1,0)-- (-1.8,-0.8);
\draw [line width=1.5pt] (-1.8,-0.8)-- (-2,-2);
\draw [line width=1.5pt] (1,0)-- (1.8,-0.8);
\draw [line width=1.5pt] (1.8,-0.8)-- (2,-2);
\draw [line width=1.5pt] (-3,0)-- (-2.2,0.8);
\draw [line width=1.5pt] (-2.2,0.8)-- (-2,2);
\draw [line width=1.5pt] (-2,2)-- (-2.8,1.2);
\draw [line width=1.5pt] (-2.8,1.2)-- (-3,0);
\draw [line width=1.5pt] (-3,0)-- (-2.2,-0.8);
\draw [line width=1.5pt] (-2.2,-0.8)-- (-2,-2);
\draw [line width=1.5pt] (-2,-2)-- (-2.8,-1.2);
\draw [line width=1.5pt] (-2.8,-1.2)-- (-3,0);
\draw [line width=1.5pt] (2,2)-- (2.2,0.8);
\draw [line width=1.5pt] (2.2,0.8)-- (3,0);
\draw [line width=1.5pt] (3,0)-- (2.2,-0.8);
\draw [line width=1.5pt] (2.2,-0.8)-- (2,-2);
\draw [line width=1.5pt] (2,2)-- (2.8,1.2);
\draw [line width=1.5pt] (2.8,1.2)-- (3,0);
\draw [line width=1.5pt] (2,-2)-- (2.8,-1.2);
\draw [line width=1.5pt] (2.8,-1.2)-- (3,0);
\begin{scriptsize}
\draw [fill=qqzzff] (-2,2) ++(-2.5pt,0 pt) -- ++(2.5pt,2.5pt)--++(3pt,-3pt)--++(-3pt,-3pt)--++(-3pt,3pt);
\draw [fill=black] (2,2) circle (2.5pt);
\draw [fill=qqzzff] (2,-2) ++(-2.5pt,0 pt) -- ++(2.5pt,2.5pt)--++(3pt,-3pt)--++(-3pt,-3pt)--++(-3pt,3pt);
\draw [fill=black] (-2,-2) circle (2.5pt);
\draw [fill=black] (0,3) circle (2.5pt);
\draw [fill=black] (0,1) circle (2.5pt);
\draw [fill=black] (1,0) circle (2.5pt);
\draw [fill=black] (3,0) circle (2.5pt);
\draw [fill=black] (0,-1) circle (2.5pt);
\draw [fill=black] (0,-3) circle (2.5pt);
\draw [fill=black] (-1,0) circle (2.5pt);
\draw [fill=black] (-3,0) circle (2.5pt);
\draw [fill=black] (-0.8,2.2) circle (2.5pt);
\draw [fill=black] (0.8,2.2) circle (2.5pt);
\draw [fill=black] (0.8,1.8) circle (2.5pt);
\draw [fill=black] (1,1.2) circle (2.5pt);
\draw [fill=black] (-0.8,1.8) circle (2.5pt);
\draw [fill=black] (-1,1.2) circle (2.5pt);
\draw [fill=black] (-1.2,2.8) circle (2.5pt);
\draw [fill=black] (1.2,2.8) circle (2.5pt);
\draw [fill=black] (-1,-1.2) circle (2.5pt);
\draw [fill=black] (1,-1.2) circle (2.5pt);
\draw [fill=black] (0.8,-1.8) circle (2.5pt);
\draw [fill=black] (-0.8,-1.8) circle (2.5pt);
\draw [fill=black] (-0.8,-2.2) circle (2.5pt);
\draw [fill=black] (0.8,-2.2) circle (2.5pt);
\draw [fill=black] (-1.2,-2.8) circle (2.5pt);
\draw [fill=black] (1.2,-2.8) circle (2.5pt);
\draw [fill=black] (-1.2,1) circle (2.5pt);
\draw [fill=black] (-1.8,0.8) circle (2.5pt);
\draw [fill=black] (1.2,1) circle (2.5pt);
\draw [fill=black] (1.8,0.8) circle (2.5pt);
\draw [fill=black] (-1.2,-1) circle (2.5pt);
\draw [fill=black] (1.2,-1) circle (2.5pt);
\draw [fill=black] (-1.8,-0.8) circle (2.5pt);
\draw [fill=black] (1.8,-0.8) circle (2.5pt);
\draw [fill=black] (-2.2,0.8) circle (2.5pt);
\draw [fill=black] (-2.8,1.2) circle (2.5pt);
\draw [fill=black] (-2.2,-0.8) circle (2.5pt);
\draw [fill=black] (-2.8,-1.2) circle (2.5pt);
\draw [fill=black] (2.2,0.8) circle (2.5pt);
\draw [fill=black] (2.2,-0.8) circle (2.5pt);
\draw [fill=black] (2.8,1.2) circle (2.5pt);
\draw [fill=black] (2.8,-1.2) circle (2.5pt);
\draw[color=black] (-2.25,2.25) node {s};
\draw[color=black] (2.25,-2.25) node {t};
\end{scriptsize}
\end{tikzpicture}
	\subcaption{Second iteration of the gadget replacement.}
	\end{minipage}
	\caption{Example of a sequence of gadget replacements $G[H]$ with $G$ and $H$ both copies of $C_4$.}
	\label{fig:twoterm-root-gadgetreplacement}
\end{figure}

It can be shown \cite{BM1,cox} that 
\begin{eqnarray*} 
\Rel{G[H(u,v)]} & = & (\Rel{H} + \spRel{H,u,v})^m \cdot \\
 & & \Relpnum{G} {\frac{\Rel{H}}{\Rel{H} + \spRel{H,u,v)}}},
 \end{eqnarray*}

However, the gadget replacement formula for two-terminal reliability is much simpler.

\begin{theorem}
\label{gadgettheorem}
Let $H$ be a graph with two distinct distinguished vertices $u$ and $v$. Then for any graph $G$ with terminals $s$ and $t$,
\begin{eqnarray} 
\label{gadgetformula}
\TRelp{G[H(u,v)]}{s}{t} & = & \TRelpnum{G}{s}{t}{\TRelp{H}{u}{v}}
\end{eqnarray}
\end{theorem}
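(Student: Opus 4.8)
The plan is to argue probabilistically, reducing the random operational subgraph of $G[H(u,v)]$ to that of $G$ with a rescaled edge probability. Fix $p \in [0,1]$, write $q = \TRelp{H}{u}{v}$ for the value of the two-terminal reliability of $H$ at $p$, and for each edge $e = \{x,y\} \in E(G)$ let $H_e$ be the copy of $H$ substituted for $e$, with $u$ identified to $x$ and $v$ to $y$. The key step is the structural claim that, for any fixed set of operational edges, the operational subgraph of $G[H(u,v)]$ contains an $s$-$t$ path if and only if the subgraph of $G$ on $V(G)$ that keeps exactly those $e$ for which the operational part of $H_e$ contains a $u$-$v$ path contains an $s$-$t$ path. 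This rests on the fact that the copies $H_e$ are pairwise edge-disjoint and meet one another only in vertices of $G$, so every edge of $G[H(u,v)]$ lies in a unique copy and every internal vertex of a copy is private to it: a simple $s$-$t$ path in the operational subgraph visits $G$-vertices $s = x_0, x_1, \dots, x_m = t$ and traverses between consecutive $x_{i-1}$ and $x_i$ entirely inside the copy $H_{e_i}$, where $e_i = \{x_{i-1},x_i\} \in E(G)$, giving a $u$-$v$ path there, so that $x_0,\dots,x_m$ is an $s$-$t$ path in the reduced subgraph of $G$; conversely, concatenating the $u$-$v$ paths guaranteed inside the relevant copies along an $s$-$t$ path of the reduced subgraph produces an $s$-$t$ walk, hence an $s$-$t$ path, in $G[H(u,v)]$. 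I expect this bookkeeping — that a simple path cannot leave a copy except at a $G$-vertex, and that walks prune to paths — to be the only genuinely delicate point; the rest is formal.

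Granting the structural claim, the conclusion is immediate. Since the copies $H_e$ have pairwise disjoint edge sets, the events $A_e = \{\text{the operational part of $H_e$ has a $u$-$v$ path}\}$ are mutually independent, and each has probability exactly $q$ because $H_e$ is an isomorphic copy of $H$ whose edges are up independently with probability $p$. Hence the random subgraph of $G$ that keeps $e$ precisely when $A_e$ occurs has exactly the distribution of the operational subgraph of $G$ when each edge is up independently with probability $q$, and the structural claim yields
\[ \TRelp{G[H(u,v)]}{s}{t} = \Pr\bigl[\text{that subgraph of $G$ has an $s$-$t$ path}\bigr] = \TRelpnum{G}{s}{t}{q} \]
for every $p \in [0,1]$. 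As both sides are polynomials in $p$, this numerical identity on $[0,1]$ gives the polynomial identity \eqref{gadgetformula}.

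As an alternative, one can instead induct on $|E(G)|$: conditioning on whether the edge-disjoint copy $H_e$ contains an operational $u$-$v$ path shows
\[ \TRelp{G[H(u,v)]}{s}{t} = q\cdot\TRelp{(G\bullet e)[H(u,v)]}{s}{t} + (1-q)\cdot\TRelp{(G-e)[H(u,v)]}{s}{t}, \]
and combining the induction hypothesis applied to $G\bullet e$ and $G-e$ with the Factor theorem for $\TRelpnum{G}{s}{t}{q}$ — together with the fact, noted in the introduction, that loops created by contraction affect neither reliability — closes the induction, the base cases being graphs with at most one edge. I would present the direct probabilistic argument as the main proof and mention this second route only as a remark.
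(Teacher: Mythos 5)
Your main argument is essentially the paper's own proof: both reduce the theorem to the structural claim that $s$ and $t$ communicate in the operational subgraph of $G[H(u,v)]$ if and only if they communicate in the subgraph of $G$ keeping exactly the edges $e$ whose copy $H_e$ has an operational $u$-$v$ path, and then use the edge-disjointness of the copies to replace each edge of $G$ by an independent event of probability $\TRelp{H}{u}{v}$. You supply more detail than the paper does on the ``straightforward'' structural step (the path-decomposition bookkeeping), which is a fine addition; the inductive alternative is a valid but unnecessary remark.
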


\begin{proof}
If $F$ is a graph with edge set $E(F)$, and $S \subseteq E(F)$, $F(S)$ will denote the spanning subgraph of $F$ with edge set $S$. Let $\mathcal{S}_{G}$ be the collection of subsets of edges $S$ of $G$ such that $s$ and $t$ can communicate in $G(S)$. For a spanning subgraph $S$ of $G[H(u,v)]$, let $\widehat{S}$ be the set of edges $e$ of $G$ such that the endpoints of $e$, $u_{e}$ and $v_{e}$, can communicate in $H_{u_{e},v_{e}}$. Then it is straightforward to see that $s$ and $t$ can communicate in $G[H(u,v)](S)$ if and only if they can communicate in $G(\widehat{S})$, so it follows that 
\begin{eqnarray*} 
\TRelp{G[H(u,v)]}{s}{t} & = & \sum_{S \in \mathcal{S}_{G}} (\TRelp{H}{u}{v})^{i}(1-\TRelp{H}{u}{v})^{m-i}\\
                                    & = & \TRelpnum{G}{s}{t}{\TRelp{H}{u}{v}}
\end{eqnarray*}
\end{proof}

We can use Theorem~\ref{gadgettheorem} to find two-terminal roots even further to the left. Suppose we take $G = G_{0} = C_4$, with $s$ and $t$ being a pair of nonadjacent vertices, and consider the graph $G_{i+1} = G_{i}[G(s,t)]$, with terminals $s$ and $t$. From formula (\ref{gadgetformula}), we see that the two-terminal roots of $G_{i+1}$ are the solutions to the equation 
\begin{eqnarray} 
2p^2-p^4 & = & r, \label{C4recursiveroot}
\end{eqnarray}
where $r$ is a two-terminal root of $G_{i}$.
The solutions to (\ref{C4recursiveroot}) are
\[ p = \pm \sqrt{1 \pm r}.\]
Thus, setting $f(r) = -\sqrt{1-r}$, we find that the negative of the golden ratio, 
\[ -\varphi = -\frac{1+\sqrt{5}}{2} \approx -1.618034\]
is an attractive fixed point of $f$ (as $f(-\varphi) =  -\varphi$ and $f^\prime(-\varphi) = 0$), and hence there are two-terminal roots approaching $-\varphi$.

\vspace{0.25in}
The disk for which the all-terminal roots were originally conjectured to be in was $|z - 1| \leq 1$, and the furthest an all terminal root has been located from $z = 1$ is approximately $1.13$. In the two-terminal case, we have already seen the existence of roots much further away from $1$ (with distance approaching $1 + \varphi \approx 2.618034$). 

\section{Density of Two-Terminal Roots}

With regards to the all-terminal reliability, while not every root is in the closed unit disk centered at $z = 1$, the closure of the all-terminal roots is known to contain the closed unit disk, and is not known to contain any set of positive measure outside the disk.  What is true of two-terminal reliability roots? Might it be the case that there is no set of positive measure outside the unit disk at $z = 1$? The answer is emphatically no.

\begin{theorem}
The closure of two-terminal reliability roots contain the closed unit disks centred at $0$ and at $1$.
\label{thm:TwoTerm-ClosureDisks}
\end{theorem}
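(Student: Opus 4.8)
The plan is to exhibit a single countable family of (multi)graphs whose two-terminal roots are dense in the union of the two closed unit disks; since the closure of the set of two-terminal roots is closed, this suffices. The engine is the following density principle: \emph{for every graph $N$ with two distinguished vertices, writing $R=\TRelp{N}{u}{v}$, the open set $U_N=\{z\in\mathbb{C}:|R(z)|<1\}$ lies in the closure of the two-terminal roots.} To produce candidates, fix $N$ and, for integers $k,\ell\ge 1$, let $P_\ell$ be the path with $\ell$ edges and let $G_{k,\ell}$ be the graph obtained by taking $k$ internally disjoint copies of the series string $P_\ell[N(u,v)]$ between two terminals $s,t$, i.e.\ $G_{k,\ell}=\Theta^{1[k]}\bigl[\,P_\ell[N(u,v)]\,\bigr]$. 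Two applications of Theorem~\ref{gadgettheorem}, using $\TRelpnum{P_\ell}{u}{v}{x}=x^\ell$ and $\TRelpnum{\Theta^{1[k]}}{s}{t}{x}=1-(1-x)^k$ (the case $l=1$ of equation~\reff{eqn:TwoTerm-Real-GenTheta}), give $\TRelp{G_{k,\ell}}{s}{t}=1-\bigl(1-R^\ell\bigr)^k$.

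To prove the principle, first fix $\ell$. The roots of $1-(1-R^\ell)^k$ are the solutions of $R(z)^\ell=1-\zeta$ over all $k$-th roots of unity $\zeta$; since $R^\ell$ is a nonconstant polynomial (hence an open mapping) and, over all $k$, the roots of unity are dense in the unit circle, a routine pull-back shows these roots become dense (as $k\to\infty$) in the level set $L_\ell=\{z:|1-R(z)^\ell|=1\}$. Thus each $L_\ell$, and hence $\overline{\bigcup_{\ell\ge 1}L_\ell}$, lies in the closure of the two-terminal roots. The crux is that this union is dense in $U_N$. Fix $z_0\in U_N$ with $R(z_0)\ne 0$ and $R'(z_0)\ne 0$ (the finitely many remaining points of $U_N$ do not affect the closure), put $\rho_0=|R(z_0)|\in(0,1)$, and let $\varepsilon>0$. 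Since $R$ is open, $R(\{|z-z_0|<\varepsilon\})$ contains a subarc of positive angular width of the circle $\{|w|=\rho_0\}$; applying $w\mapsto w^\ell$ with $\ell$ large enough stretches this subarc onto the entire circle $\{|v|=\rho_0^\ell\}$, which meets $C=\{|v-1|=1\}$ because $\rho_0^\ell\in(0,2)$ and $|v|$ attains every value in $[0,2]$ on $C$. Pulling a point of $C\cap\{|v|=\rho_0^\ell\}$ back through $w\mapsto w^\ell$ and then through $R$ yields a point $z$ with $|z-z_0|<\varepsilon$ and $z\in L_\ell$. This proves the principle.

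It remains to instantiate it well, and a single family suffices. Apply the principle with $N=\Theta^{1[j]}$, so $R(z)=1-(1-z)^j$, for every $j\ge 1$; then the closure of the two-terminal roots contains $\bigcup_{j\ge 1}\{z:|1-(1-z)^j|<1\}$. Taking $j=1$ gives the open disk $\{|z|<1\}$, so the closure contains the closed unit disk about $0$. For the disk about $1$, fix $z_0$ with $|z_0-1|<1$ and $z_0\ne 1$ and write $1-z_0=se^{i\chi}$ with $0<s<1$; it suffices to find $j\ge 1$ with $\cos(j\chi)\ge\tfrac12$, since then $|1-(1-z_0)^j|^2=s^{2j}-2s^j\cos(j\chi)+1\le s^{2j}-s^j+1<1$, placing $z_0$ in $\{z:|1-(1-z)^j|<1\}$. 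Such a $j$ exists: by Weyl equidistribution of $\{j\chi\ \mathrm{mod}\ 2\pi\}$ when $\chi/2\pi$ is irrational, and one may take $j$ to be the denominator of $\chi/2\pi$ (so $j\chi\in 2\pi\mathbb{Z}$ and $\cos(j\chi)=1$) when $\chi/2\pi$ is rational. Hence the union contains $\{z:|z-1|<1,\ z\ne 1\}$, whose closure is the closed unit disk about $1$; taking closures finishes the proof.

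The main obstacle is the density principle, and within it the ``filling-in'' step: the level curves $L_\ell=\{|1-R^\ell|=1\}$ are one-dimensional, yet their union over $\ell$ must be dense in the two-dimensional region $\{|R|<1\}$. The mechanism is the winding argument above — after applying $R$ and then $z\mapsto z^\ell$, a tiny disc is sent onto a full circle about the origin, and every such circle of radius less than $2$ crosses $C$. Once this is granted, the limit passage $k\to\infty$ (populating each $L_\ell$ with genuine two-terminal roots) and the elementary angular argument showing that the regions $\{|1-(1-z)^j|<1\}$ exhaust the disk about $1$ up to the single point $z=1$ are routine.
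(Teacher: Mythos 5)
Your proof is correct. It runs, at bottom, on the same family of graphs as the paper's: with $N=\Theta^{1[j]}$ your $G_{k,\ell}$ is precisely the bundle-substituted generalized theta graph $\Theta^{\ell[k]}[B_j(u,v)]$ that the paper uses, and the engine in both cases is the density of the roots of unity on the unit circle together with the fact that $|1-\omega|$ sweeps out $[0,2]$. But the packaging is genuinely different. The paper treats the disk at $0$ by writing the roots of $1-(1-p^l)^k$ explicitly as $l$-th roots of $1-\omega$ and controlling modulus (via the choice of $\omega$) and argument (via large $l$) separately, and then disposes of the disk at $1$ by bundle substitution plus the phrase ``by a similar argument.'' You instead isolate a single reusable density principle --- for any nonconstant gadget reliability $R$, the open region $\{z:|R(z)|<1\}$ lies in the closure of the two-terminal roots --- proved by an open-mapping/winding argument (a small disk, pushed through $R$ and then through $w\mapsto w^\ell$ for large $\ell$, wraps around a full circle of radius in $(0,2)$, which must cross $\{|v-1|=1\}$), and then instantiate it with $R=z$ and $R=1-(1-z)^j$. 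This buys a statement strictly more general than the theorem, and, more importantly, a complete argument for the disk at $1$: your covering of the punctured disk by the regions $\{z:|1-(1-z)^j|<1\}$, using equidistribution (or the Dirichlet/rational case) for $j\chi$ modulo $2\pi$, supplies exactly the step the paper leaves implicit. Two cosmetic points: the hypothesis $R'(z_0)\neq 0$ is superfluous, since nonconstant holomorphic maps are open everywhere; and the principle should explicitly assume $R$ is nonconstant, which does hold in both of your instantiations.
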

\begin{proof}
Let us consider again the two-terminal reliability polynomial of $\Theta^{l[k]}$, the generalized theta graph with $k$ paths of length $l$ joining $s$ and $t$: 
$$\TRelp{\Theta^{l[k]}}{s}{t} = 1-(1-p^l)^k.$$
The two-terminal roots of these graphs can be computed as follows:
\begin{align*}
    &\TRelp{\Theta^{l[k]}}{s}{t} = 0 \\
    \iff & (1-p^l)^k = 1\\
    \iff & 1-p^l = \omega \text{ (for $\omega$ some $k^{th}$ root of unity)} \\
    \iff & p^l = 1-\omega \\
    \iff &p =  \nu \text{ (for $\nu$ some $l^{th}$ root of $1-\omega$)}
\end{align*}
Let $r$ and $\theta$ satisfy $0 < r < 1$ and $0 < \theta < 2\pi$, and let $\epsilon > 0.$ Without loss of generality, $0 < r - \epsilon, r + \epsilon < 1, 0 < \theta - \epsilon,$ and $\theta + \epsilon < 2\pi$.  We will show that there is a value $\omega$, a $k$th root of unity, so that some $\nu$, an $l^{th}$ root of $1-\omega$, is in the small pie-shaped piece $\{Re^{i\gamma} | r - \epsilon < R < r + \epsilon \text{ and } \theta - \epsilon < \gamma < \theta + \epsilon\}$; this will show that the closure of the two-terminal roots of the generalized theta graphs contains the unit disk centered at the origin.

First, as the $l$ arguments of the $l^{th}$ roots of unity of a number are equally spaced out, for all sufficiently large $l \geq L$, we can ensure that the argument $\gamma$ of some $l^{th}$ root of any nonzero number is in $[\theta - \epsilon, \theta + \epsilon]$. 

Second, the $k^{th}$ roots of $1$, running over all $k$, fill up the boundary of the unit circle centered at $z = 0 $.  So, if we consider one minus these values, the resulting complex numbers take on values whose moduli are close to every number in $[0,2]$.
We can therefore choose a $k^{th}$ root of unity $\omega$ such that $(r - \epsilon)^L <  |1 - \omega| < (r + \epsilon)^L$. Then there is an $L^{th}$ root, say $\nu$, of $1-\omega$ such that $r - \epsilon < |\nu| < r + \epsilon$, and the argument $\gamma$ of $\nu$ lies in $[\theta - \epsilon, \theta + \epsilon]$. As noted earlier, this implies that the closure of the two-terminal reliabilities of $\Theta_{l[k]}$ contain the closed unit disk centered at $0$.

What about for the closed unit disk centred at $1$?  For any graph $G$ with terminals $s$ and $t$, replace every edge in $G$ by a bundle $B_{m}$ of size $m$ (denote the vertices of $H$ by $u$ and $v$).  Then by Theorem~\ref{gadgettheorem}, we find that 
\begin{eqnarray*} 
\TRelp{G[B_m(u,v)]}{x}{y} & = & \TRelpnum{G}{x}{y}{1-(1-p)^m}.
\end{eqnarray*}
Thus the two-terminal roots of $(G[B_m(u,v)],s,t)$ are one minus the $m$--th roots of $1-r$, where $r$ is any two-terminal root of $(G,s,t)$.
By a similar argument to the one above, the closure of the set 
\[ \{ z \in \mathbb:{C}: z^m = 1-r \mbox{~ for some $r$ a root of $\TRelp{\Theta^{l[k]}}{s}{t}$} \} \] contains the unit disk centered at $0$, and hence the two-terminal roots of $\TRelp{{\Theta^{l[k]}}[B_m(u,v)]}{s}{t}$, as $l$, $k$ and $m$ range over all positive integers, contains the unit disk centered at $1$ as well.
\end{proof}

%Figure \ref{fig:TwoTerm-Roots-ClosureDisksGT} shows the roots of $\Theta_{l[k]}$ for $l$ and $k$ between $1$ and $10$, and then shows the $m$th roots of those for $m$ between $1$ and $10$. 
%
%\begin{figure}[h!]
%    \centering
%    \begin{minipage}{0.45\linewidth}
%    		\includegraphics[width=3.0in]{densityPlot0.pdf}
%    \subcaption{Plot of the two-terminal roots of $\Theta_{l[k]}$ with $1 \leq l, k \leq 10$}
%    \end{minipage}\hfill
%    \begin{minipage}{0.45\linewidth}
%    \includegraphics[width=3.0in]{densityPlot1.pdf}
%    \subcaption{Plot of the two-terminal roots of $\Theta_{l[k]}[B_m(u,v)$ with $1 \leq l,k,m \leq 10$.}
%    \end{minipage}
%    \caption{Illustration of the closure of the roots inside the two unit disks centered at $0$ and $1$ (the boundaries of the two disks are shown as well).}
%    \label{fig:TwoTerm-Roots-ClosureDisksGT}
%\end{figure}

%\begin{figure}[h!]
%    \centering
%    \begin{minipage}{0.45\linewidth}
%    		\includegraphics[width=3.0in]{TwoTerm-Roots-GTheta.eps}
%    \subcaption{Plot of the two-terminal roots of $\Theta_{l[k]}$ with $l$ and $k$ from $1$ to $10$}
%    \end{minipage}\hfill
%    \begin{minipage}{0.45\linewidth}
%    \includegraphics[width=3.0in]{TwoTerm-Roots-GThetaMth.eps}
%    \subcaption{Plot of the $m$th roots of the two-terminal roots of $\Theta_{l[k]}$ with $l$ and $k$ from $1$ to $10$}
%    \end{minipage}
%    \caption{Illustration of the closure of the roots inside the two unit disks centred at $0$ and $1$ by taking the $m$th roots of $\Theta_{l[k]}$}
%    \label{fig:TwoTerm-Roots-ClosureDisksGT}
%\end{figure}

One final related problem. It was shown in \cite{brownColbournRoots} that every graph has a subdivision whose roots lie in the unit disk centered at $1$ -- might this still be true for two-terminal reliability, even though the roots are certainly not contained in the disk? Again, the answer is no. For example, consider the cycle once again. Let the order of the cycle be $n \geq 3$ and choose terminals $s$ and $t$ at distance $k$ ($1 \leq k \leq n/2$). As previously noted,  
\[ \TRelp{C_n}{u}{v} = p^{k}+p^{n-k}-p^{n} = -p^{k}(p^{n-k} - p^{n-2k} - 1).\]
We will show that the polynomial $g(p) = p^{n-k} - p^{n-2k} - 1$ has a root in the open left-half plane, and hence no subdivision of $C_{n}$ for $n \geq 3$ has all its two-terminal roots in the disk $|z -1| \leq 1$.

The Hermite-Biehler Theorem (see, for example \cite{wagner}) states that a real polynomial 
\[ f(x) = \sum_{i=0}^{d} a_{i}x^{i}\]
with positive leading coefficient is \textbf{weakly stable}, that is, has all its roots in the closed left-half plane, if and only if $f_{\mbox{even}} = \sum a_{2i}x^{i}$  and $f_{\mbox{odd}} =\sum a_{2i+1}x^{i}$ both have positive leading coefficient (or are identically $0$), both have all nonpositive real roots, and the roots of $f_{\mbox{odd}}$ {\em interlace} those of $f_{\mbox{even}}$ (that is, if $s_{1} \leq s_{2} \leq \cdots s_{l}$ and $r_{1} \leq r_{2} \leq \cdots \leq r_{m}$ are the roots of $f_{\mbox{odd}}$ and $f_{\mbox{even}}$ respectively, then $l \leq m$ and $s_{1} \leq r_{1} \leq s_{2} \leq r_{2} \leq \cdots$). The upshot is that we can show that $g$ has a root in the open left-half plane if 
\[ f(p) = (-1)^{n-k} g(-p) =  p^{n-k} + (-1)^{k+1}p^{n-2k} + (-1)^{n-k+1}\]
has a root in the right-half plane, i.e. $f$ is not weakly stable. 

To show this, we consider four cases, based on the parities of $n$ and $k$:
\begin{itemize}
\item $n$ and $k$ both even: Here $f_{\mbox{even}} = p^{(n-k)/2} - p^{(n-2k)/2} -1$, which has a positive root (as $n > k$). By the Hermite-Biehler Theorem, $f$ is not weakly stable.
\item $n$ is even and $k$ is odd: Here $f_{\mbox{even}} = p^{(n-2k)/2} + 1$, which has non-real roots unless $n = 2k$, in which case it has no roots. However, in this case, $f_{\mbox{odd}} = p^{(n-k-1)/2}$, and the exponent is positive (as $k \geq 3$). By the Hermite-Biehler Theorem, $f$ is not weakly stable, since if it were, $f_{\mbox{even}}$ must be at least as as many roots as $f_{\mbox{odd}}$ does.
\item $n$ is odd and $k$ is even: Here $f_{\mbox{even}} = 1$, which has no roots,  while $f_{\mbox{odd}} = p^{(n-k-1)/2} + p^{(n-2k-1)/2}  = p^{(n-2k-1)/2} \left( p^{k/2}+1 \right)$, which has at least two roots. As in the previous case, $f$ is not weakly stable.
\item $n$ and $k$ both odd: Here $f_{\mbox{even}} = p^{(n-k)/2} -1$, which has a positive root (as $n > k$). Again, by the Hermite-Biehler Theorem, $f$ is not weakly stable.
\end{itemize}

Thus every cycle of length at least $3$ has a root in the left-half plane, outside the unit disk centered at $0$, and hence {\em every} subdivision of {\em any} such a graph has a root outside the disk.

\section {Two-Terminal Fractals}

We are going to make even more use out of Theorem~\ref{gadgettheorem} and gadgets.  
We know that for graphs $G$ and $H$ with terminals $s$ and $t$ and $u$ and $v$, respectively,
\begin{eqnarray*} 
\TRelp{G[H(u,v)]}{s}{t} & = & \TRelpnum{G}{s}{t}{\TRelp{H}{u}{v}}.
\end{eqnarray*}
It would be natural to choose $(H,u,v) = (G,s,t)$, and ask what happens to the roots, here and under repeated iterations (see, for example, Figure~\ref{fig:twoterm-root-gadgetreplacement}). For example, let's take the graph $G = C_{4}$ with $s$ and $t$ two adjacent vertices. The roots of the third and sixth iteration are shown in Figure~\ref{fig:C4-adjacentTerminalsRoots-Fractal}.

\begin{figure}[h!]
    \centering
    \begin{minipage}{0.45\linewidth}
    		\includegraphics[width=3.0in]{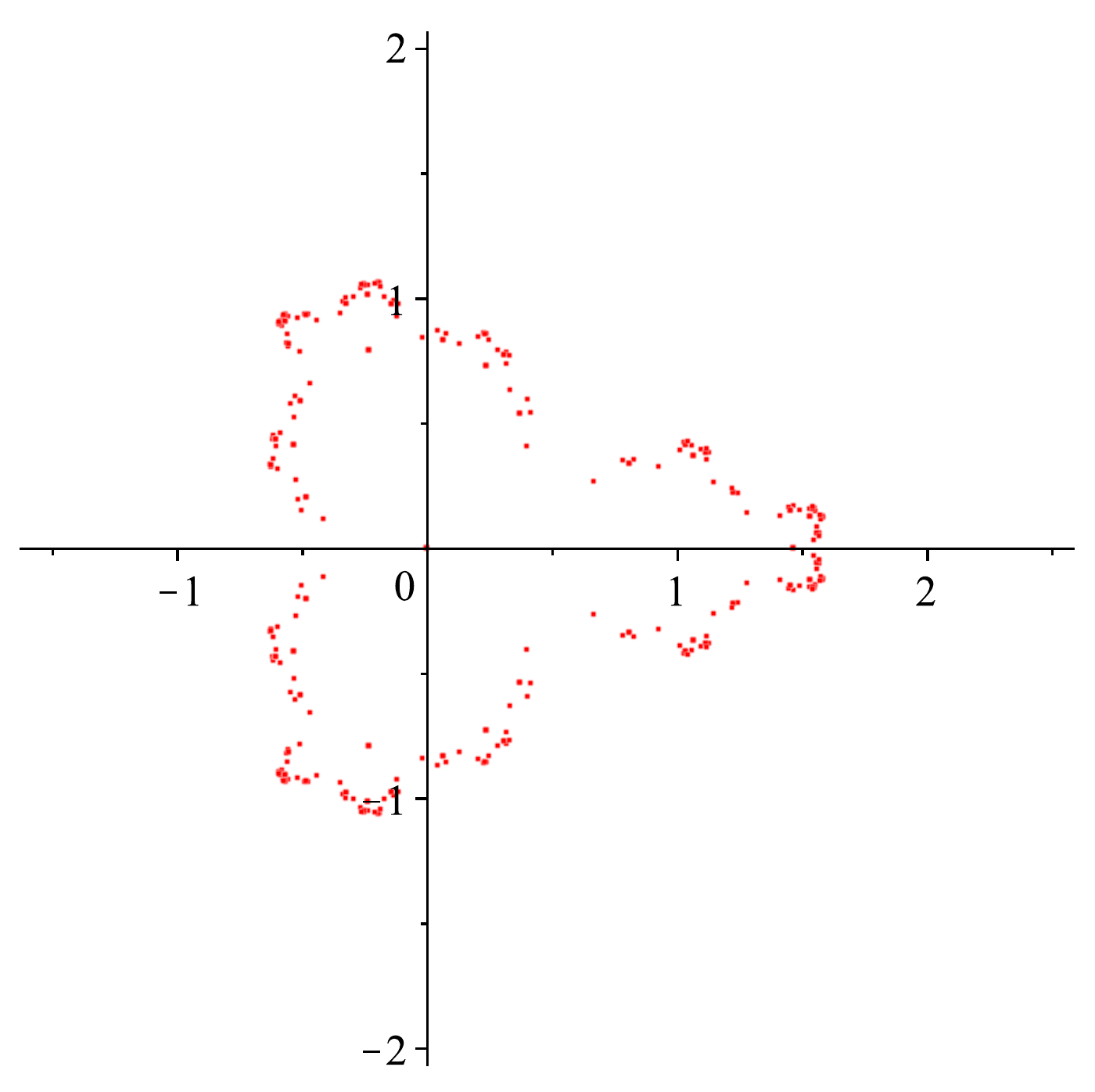}
    \subcaption{Plot of the two-terminal roots of $C_4[C_4[C_4]]$ with $s$ and $t$ adjacent in $C_4$.}
    \end{minipage}\hfill
    \begin{minipage}{0.45\linewidth}
    \includegraphics[width=3.0in]{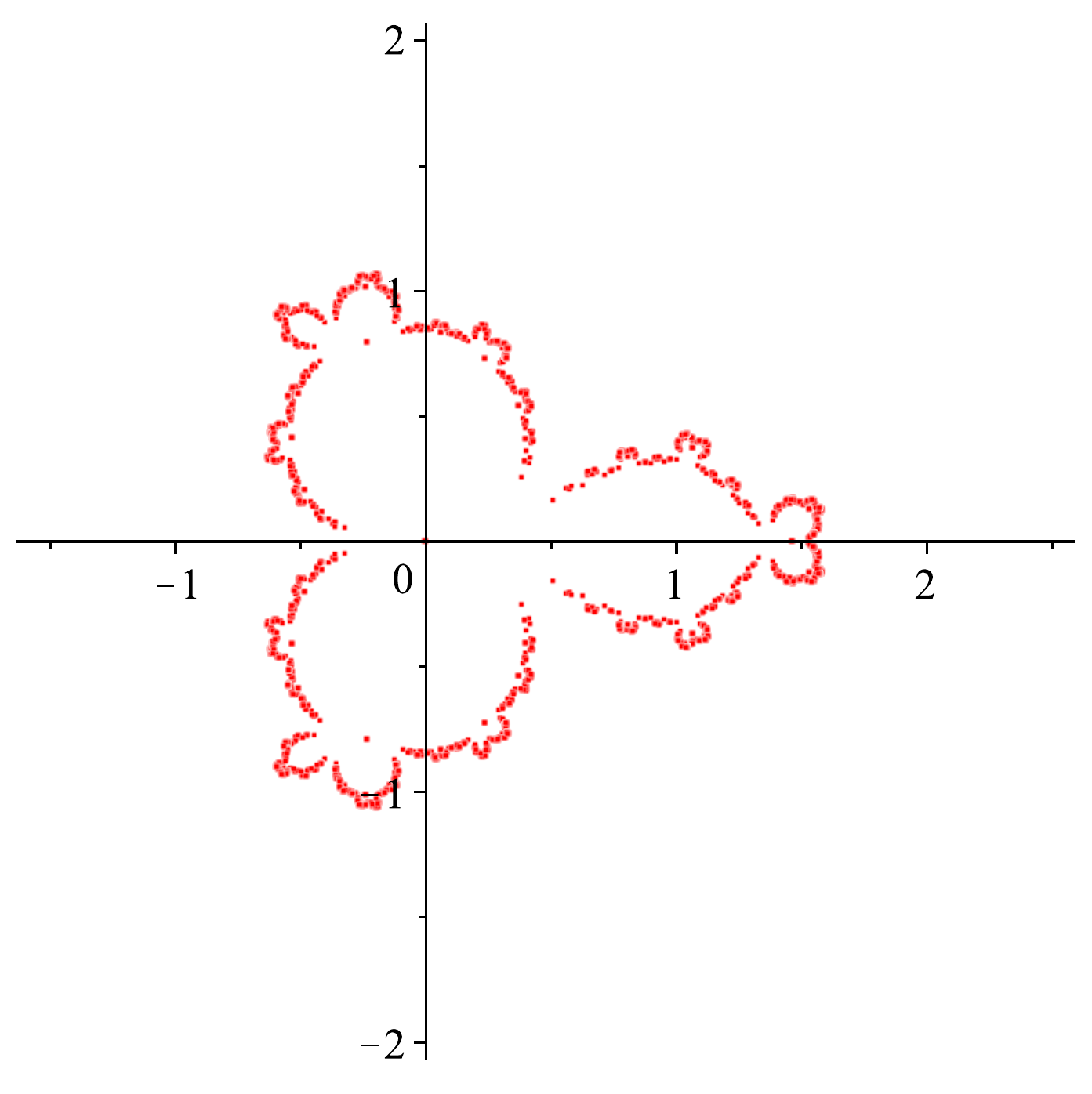}
    \subcaption{Plot of the two-terminal roots of $C_4[C_4[C_4[C_4[C_4]]]$ with $s$ and $t$ adjacent in $C_4$.}
    \end{minipage}
    \caption{Building a two-terminal attractor.}
    \label{fig:C4-adjacentTerminalsRoots-Fractal}
\end{figure}

If $G$ is a graph with (distinct) terminals $s$ and $t$, we set $G_1 = G$, and $G_{i} = G[G_{i-1}(s,t)]$ for $i \geq 2$. It follows immediately that if $f = \TRelp{G}{s}{t}$, then 
$\TRelp{G_{i}}{s}{t} = f^{\circ k}$ (the $k$-th composition of $f$ with itself). 
Moreover, the set of roots $\mathcal{T}_{i}$ of $\TRelp{G_{i}}{s}{t}$ are nested, i.e. 
$\mathcal{T}_{1}  \subseteq \mathcal{T}_{2} \subseteq  \mathcal{T}_{3} \cdots$) as $0$ is a root of $f = \TRelp{G}{s}{t}$.  

We define the \textbf{two-terminal attractor} of $(G,s,t)$ as 
\begin{eqnarray*} 
\mathcal{T}(G,s,t) & = & \overline{\bigcup_{k \geq 1} \{z \in {\mathbb C}: z \mbox{~ is a two-terminal root of } G_{k}\}}\\
 & = & \overline{\bigcup_{k \geq 1} f^{-\circ k}(\{0\})},
 \end{eqnarray*}
the closure of the \textbf{inverse orbit} of $0$ under $f$ (the inverse here denotes the set inverse: $f^{-1}(S) = \{ z : f(z) \in S\}$).

It is not hard to see that for any nonzero real polynomial $g$, the roots $g^{\circ k}$ are bounded. (The argument follows along the lines of Lemma~3.2.12 in \cite{hickmanPhDthesis}: We can assume that $g  = c_dx^d + c_{d-1}x^{d-1} + \cdots + c_1x+c_0$ has degree $d \geq 2$. If $C = \max \{ |c_{i}|: 0 \leq i \leq d-1\}$ and $R = \max  \left\{ (2/|c_d|)^{1/n-1}, C/|c_d| + 1\right\}$, then elementary inequalities show that $|z| > R$ imply that $|g(z) | > R$, from which we deduce that all roots of $g^{\circ k}$ are bounded by $R$). 
It follows that for any graph $G$ and terminals $s$ and $t$, the two-terminal attractor is bounded.

What is striking is the iterated gadget replacement operation seems to produce a fractal. As the following shows, this is not a coincidence. (In contrast, there is no known way to produce fractals for all-terminal reliability.) We refer the reader to \cite{beardon} for terminology and basic results concerning Julia and Fatou sets of rational functions. Our approach is modelled on that for {\em independence fractals} described in \cite{bhnindfractal,hickmanPhDthesis}.

\begin{theorem}
Let $G$ be a (connected) graph of order $n$ and size $m$, both at least $2$, with terminals $s$ and $t$, where we assume that every edge of $G$ is on some path between $s$ and $t$ (any edge not on an $s$-$t$ path can be removed without affecting the two-terminal reliability). Assume further that $G$ is not an $s$-$t$ path. Set $G_0 = G$, and $G_{i} = G[G_{i-1}(s,t)]$ for $i \geq 1$. Then 
\begin{enumerate}[i.]
\item if $s$ and $t$ are nonadjacent, then the two-terminal attractor $\mathcal{T}(G,s,t)$ is the \textbf{Julia set} of $\TRelp{G}{s}{t}$, and
\item if $s$ and $t$ are adjacent, then $\mathcal{T}(G,s,t)$ can be partitioned into the inverse orbits of $0$ under $\TRelp{G}{s}{t}$ and the Julia set of $\TRelp{G}{s}{t}$.
\end{enumerate}
\end{theorem}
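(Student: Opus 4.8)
The plan is to reduce the statement to a standard fact about backward orbits of rational maps. Write $f = \TRelp{G}{s}{t}$. First I would check that $f$ is a genuine polynomial of degree at least $2$ that is not a monomial: the hypotheses that $G$ has order and size at least $2$, that every edge lies on some $s$-$t$ path, and that $G$ is not an $s$-$t$ path together rule out the degenerate cases $f = c$, $f = ap$, and $f = p^{m}$. Hence $f$ is a rational map of degree $\ge 2$, so its Julia set $J(f)$ and Fatou set $F(f)$ are defined, and since $f(0)=0$ the point $0$ is a fixed point of $f$. Using $\TRelp{G_i}{s}{t} = f^{\circ (i+1)}$, the set of two-terminal roots of $G_i$ is exactly $(f^{\circ(i+1)})^{-1}(\{0\})$, so that $\mathcal{T}(G,s,t) = \overline{\bigcup_{k\ge 1} f^{-k}(\{0\})} = \overline{O^{-}(0)}$, the closure of the backward orbit of $0$.

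Next I would invoke the classical dichotomy for backward orbits (see \cite{beardon}): since $f$ is not a monomial, $0$ is not an exceptional point, so $J(f) \subseteq \overline{O^{-}(0)}$ and moreover every point of $J(f)$ is an accumulation point of $O^{-}(0)$. Complete invariance of $J(f)$ and $F(f)$ then yields the clean split. If $0 \in J(f)$, then $O^{-}(0) \subseteq J(f)$ and $\overline{O^{-}(0)} = J(f)$. If instead $0 \in F(f)$, then $O^{-}(0) \subseteq F(f)$ is disjoint from $J(f)$ while still accumulating on it, so $\overline{O^{-}(0)} = J(f) \sqcup O^{-}(0)$, a partition of the attractor into the Julia set and the backward (inverse) orbit of $0$. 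Thus both halves of the theorem reduce to deciding the membership of the fixed point $0$.

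It remains to locate $0$ relative to $J(f)$ from the adjacency of $s$ and $t$, and this is the crux. The governing quantity is the multiplier $f'(0)$: if $0$ is repelling or rationally indifferent then $0 \in J(f)$, whereas if $0$ is attracting then $0 \in F(f)$. For nonadjacent terminals I would show that $0 \in J(f)$, so that the dichotomy above gives $\mathcal{T}(G,s,t) = J(f)$, which is conclusion (i); for adjacent terminals I would show instead that $0 \in F(f)$, so that $\mathcal{T}(G,s,t) = J(f) \sqcup O^{-}(0)$, the partition asserted in (ii). I expect the main obstacle to be precisely this dictionary between the combinatorial adjacency hypothesis and the dynamical type of the fixed point $0$: the standing hypotheses that $G$ has order and size at least $2$, is not an $s$-$t$ path, and has every edge on some $s$-$t$ path are exactly what force $\deg f \ge 2$, exclude the monomial case (in which $0$ would be exceptional and the attractor would collapse to a finite set), and thereby make the backward-orbit dichotomy applicable in both cases.
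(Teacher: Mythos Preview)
Your overall strategy coincides with the paper's: identify $\mathcal{T}(G,s,t)$ with $\overline{O^{-}(0)}$, argue that $\deg f\ge 2$ and that $0$ is not exceptional (since $f$ is not a monomial), and then split into the two cases according to whether the fixed point $0$ lies in $J(f)$ or in $F(f)$.

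There is, however, a genuine error at the crux: you have the adjacency $\leftrightarrow$ dynamical-type dictionary reversed. The multiplier is $f'(0)$, and the coefficient of $p$ in $\TRelp{G}{s}{t}$ is exactly the number of edges joining $s$ and $t$. Hence when $s$ and $t$ are \emph{adjacent} one has $f'(0)\ge 1$, so $0$ is repelling (if there are multiple $s$--$t$ edges) or rationally indifferent (if there is a single such edge); in either case $0\in J(f)$ and $\overline{O^{-}(0)}=J(f)$. When $s$ and $t$ are \emph{nonadjacent} one has $f'(0)=0$, so $0$ is superattracting, $0\in F(f)$, and $\overline{O^{-}(0)}=J(f)\sqcup O^{-}(0)$. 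Your plan to ``show $0\in J(f)$ for nonadjacent terminals'' and ``$0\in F(f)$ for adjacent terminals'' will therefore fail, because the opposite holds. This is exactly how the paper's proof proceeds (and the labels (i)/(ii) in the theorem statement are swapped relative both to the paper's own proof and to its subsequent discussion).

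One further point worth tightening in the $0\in F(f)$ case: you assert $\overline{O^{-}(0)}=J(f)\sqcup O^{-}(0)$, which requires knowing that $O^{-}(0)$ has no accumulation points in $F(f)$. This follows because near the superattracting fixed point $0$ the only preimage of $0$ under any iterate is $0$ itself (B\"ottcher coordinates), so $O^{-}(0)$ is discrete in $F(f)$; the paper handles this via a Hausdorff-limit lemma applied to the nonzero roots, but either route closes the gap.
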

\begin{proof}
Set $T = \TRelp{G}{s}{t}$ and $T_i = \TRelp{G_i}{s}{t}$, so $T_{i} = T^{\circ i}$. 
%From Theorem~\ref{gadgettheorem}, the roots of $R_{i+1}$ can be found as the solutions to $R(z) = r$, where $r$ is root of $R_{i}$. As $0$ is a root of$R$, it follows inductively that the roots of $R_{i+1}$ contain the roots of $R_i$.
Now 
\begin{eqnarray*}
T & = & \TRelp{G}{s}{t}\\
  & = & \sum_{S \in {\mathcal S}_{G}} p^{|S|}(1-p)^{|E(G)-|S|}\\
  & = & \sum_{i \geq 1} {N_{i}} p^{i}(1-p)^{m-i} \\
  & = & a_{1}p + a_{2}p^2 + \cdots,
\end{eqnarray*}
where $m $ is the number of edges in $G$ and $a_{1} = N_{1}$ is the number of edges between $s$ and $t$. 

We begin assuming, for part (i), that $s$ and $t$ are adjacent, so $a_{1} \geq 1$ ($G$ may contain multiple edges, so $a_i$ may be greater than $1$). Clearly $0$ is a fixed point of $T = \TRelp{G}{s}{t}$. Moreover, as $T^\prime(0) = a_{i}$, it is either a \textbf{repelling} fixed point if $a_i > 1$ or a \textbf{rationally indifferent} fixed point if $a_i = 1$, but in either case, it follows (see Theorems 6.4.1 and 6.5.1 of \cite{beardon}) that $z = 0$ is in the Julia set of $T$. 

Note that the coefficient of $p$ in $T$ is $N_1$, so that if $T$ had degree $1$, then 
$T = N_{1}p$, so by the linear independence of the basis $\{ p^i(1-p)^{m-i-1}: i = 0,1,\ldots,m-1\}$ for the space of real polynomials of degree at most $m$, we must have $N_{i} = 0$ for all $i \neq 1$, a contradiction as $m \geq 2$ implies that $T = N_{1}p(1-p)^{m-1}$ has degree $m \geq 2$. Thus we know that $T$ has degree at least $2$. Furthermore, as $0$ is in the Julia set of $T$, by Theorem~4.2.7(ii) of \cite{beardon}, the closure of 
\[ \bigcup_{k \geq 1} T^{-k}(0) = \bigcup_{k \geq 1} \{z \in {\mathbb C}: z \mbox{~ is a two-terminal root of } G_{k}\}\]
is the Julia set $J(T)$ of $T$. This completes the proof of (i).

We move onto part (ii), by assuming now that $s$ and $t$ are not adjacent, so that $a_{1} = 0$.
Let $R = \mbox{Roots}(T)$ be the roots of $T$.  Here we find that $0$ is a (super)attracting fixed point of $T = \TRelp{G}{s}{t}$, as $\TRelp{G}{s}{t}$ and its derivative are $0$ when $p = 0$. It follows (see \cite[pg. 104]{beardon}) that $0$ is \underline{not} in the Julia set (but in the Fatou set) of $T$, and therefore the entire inverse orbit of $0$ is outside the Julia set of $T$. Let $k$ be the length of a shortest $s$-$t$ path (so $k \geq 2$); it should be clear that $T^\prime = T/p^{k}$ is a polynomial with the same nonzero roots as $T$. From the fact that $G$ is not an $s$-$t$ path, it follows that $\TRelp{G}{s}{t}$ is not a power of $p$, and hence the set $R^\prime = R - \{0\}$, is nonempty (and is the roots of $T^\prime$). Now for all $r \in R^\prime$, $r$ is not in a periodic point, as $T(r) = 0$ and $T(0) = 0$. Moreover, for the same reason, the \textbf{forward orbit} of $r$, $\{ T^{k}(r): k \geq 1\}$, is not dense on any curve, and from Theorem~3.2.10 of \cite{hickmanPhDthesis}, it follows that, in the Hausdorff metric, 
\[ \lim_{k \rightarrow \infty} T^{-k}(R^\prime) =  J(T),\] 
the Julia set of $T$. 
% On the other hand, a straightforward argument shows that 
%\[ T^{-k}(R^\prime)  = \mbox{Roots}(T^{k+1})-  \mbox{Roots}(T^{k}).\]
Finally, Julia sets are always \textbf{perfect}, that is equal to its accumulation points (c.f.\  Theorem 4.2.4. of \cite{beardon}), so finally we conclude that the Julia set of $T$ is equal to the accumulation points of $\cup \mbox{Roots}(T_i)$. None of the points of $\cup \mbox{Roots}(T_i)$ lie in the Julia set (as the Julia set is closed under backward and forward orbits), so we conclude that $\mathcal{T}(G,s,t)$ can be partitioned into the inverse orbits of $0$ under $\TRelp{G}{s}{t}$ and the Julia set of $\TRelp{G}{s}{t}$.

\end{proof}
\noindent (We left out the case when the graph $G$ is an $s$-$t$ path, but in this case, the two-terminal reliability is a power of $p$, and the two-terminal attractor is just $\{0\}$).

\vspace{0.25in}
So we see that while the Julia set of the two-terminal reliability shows up precisely when the terminals are adjacent, it is embedded in the attractor when the terminals are nonadjacent, as the accumulation points. Figure~\ref{fig:C4-antipodalTerminalsRoots-Fractal} shows (an approximation of) the two-terminal attractor for $C_4$ with nonadjacent terminals. (It is interesting to compare Figures~\ref{fig:C4-adjacentTerminalsRoots-Fractal} and \ref{fig:C4-antipodalTerminalsRoots-Fractal} and see how much a different choice of terminals makes to the two-terminal attractor!)

\begin{figure}
    \centering
    \includegraphics[width=3.65in]{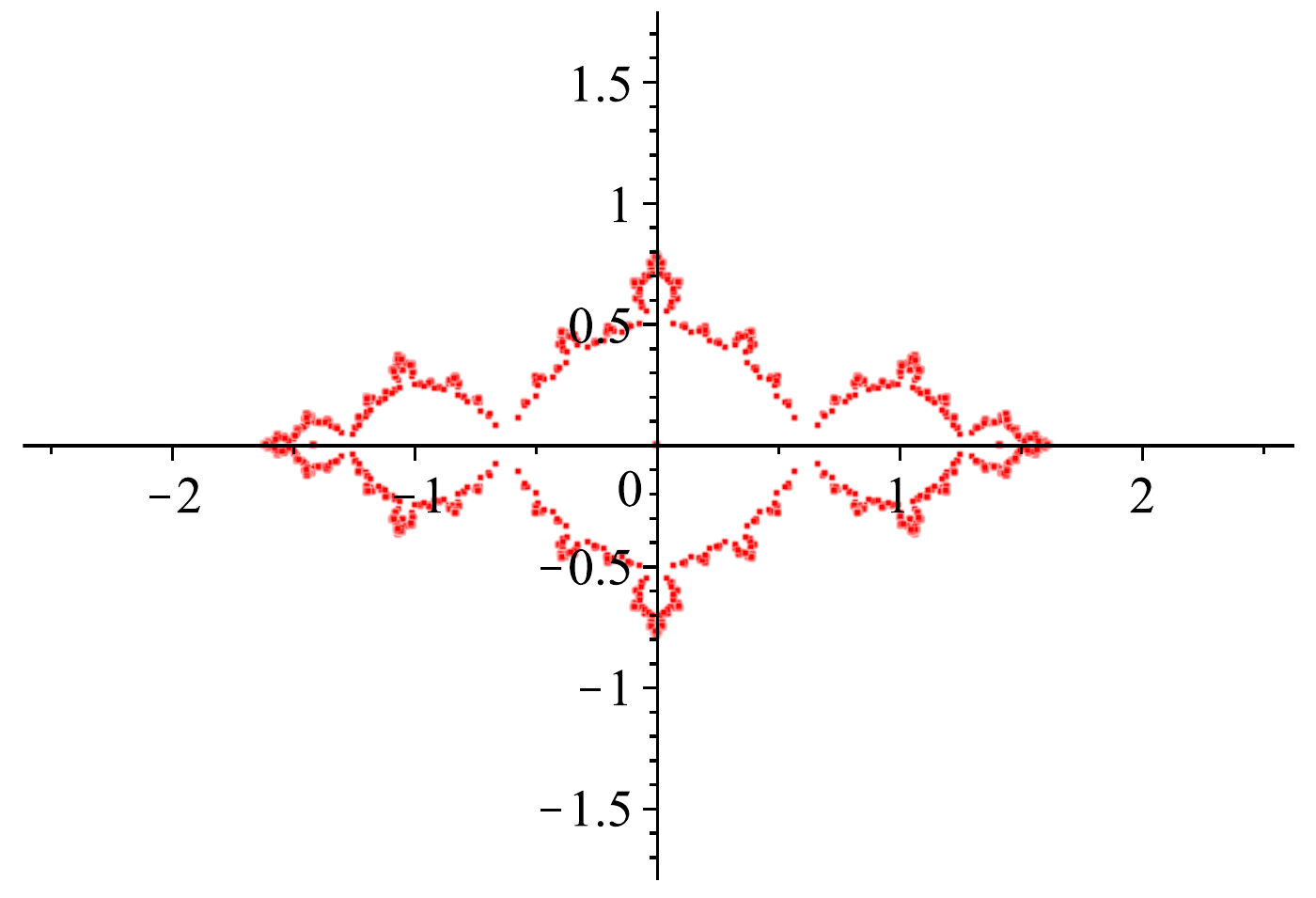}
    \caption{Plot of the union of two-terminal roots of repeated gadget replacement on $C_4$ with two nonadjacent terminals.}
    \label{fig:C4-antipodalTerminalsRoots-Fractal}
\end{figure}

In either case, it is worthwhile to investigate the Julia sets of two-terminal reliabilities (in most cases, Julia sets are \textbf{fractals}, exhibiting self-similiarity). One natural question to ask is whether the Julia set is connected -- as a topological space (which is different from asking whether the underlying graph is connected!). Here the problem appears in general to be difficult. It seems from Figures~\ref{fig:C4-adjacentTerminalsRoots-Fractal}(b) and \ref{fig:C4-antipodalTerminalsRoots-Fractal} that the Julia sets for $C_4$, whether the terminals are adjacent or not, are connected (although the resulting Julia sets are quite different!). A well known result (see, for example, \cite{beardon}) is that if $f$ is a polynomial of degree at least $2$, then $J(f)$ is connected if and only if the forward orbit of each of $f$'s critical points is bounded.

The first nontrivial graph to consider is $C_{3}$, with, of course, two adjacent terminals, $s$ and $t$. An approximation to its two terminal attractor is shown in Figure~\ref{fig:C36iterations}. 

\begin{figure}
    \centering
    \includegraphics[width=3.65in]{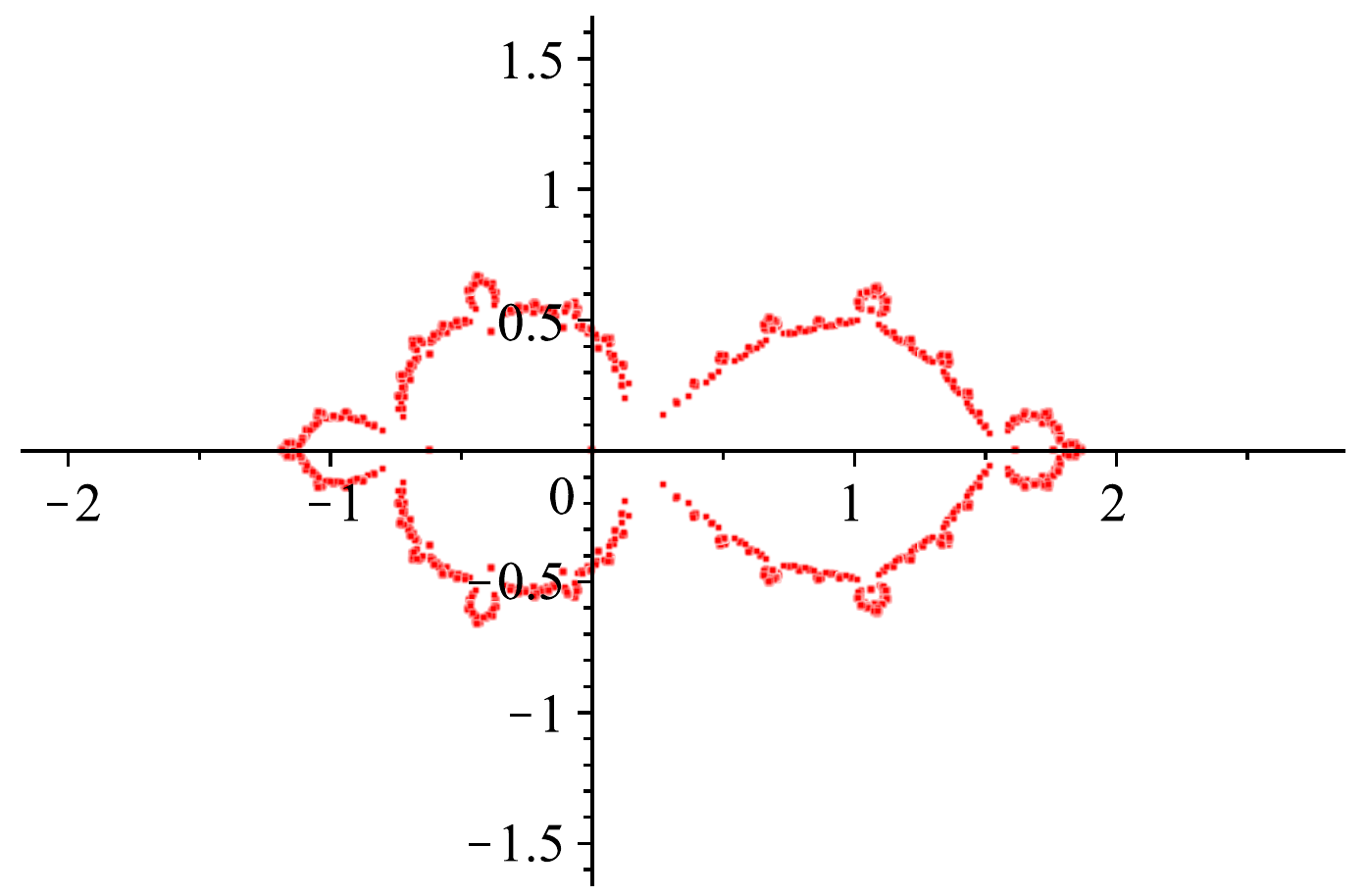}
    \caption{Plot of the two-terminal roots of the sixth repeated gadget replacement of $C_3$ (with two adjacent terminals).}
    \label{fig:C36iterations}
\end{figure}

It seems unclear whether the two-terminal attractor (which is a Julia set) is connected or not -- does it connect up to the root at $z = 0$? We can use the conditions mentioned earlier to analyze the connectivity. The two-terminal reliability here is obviously $f = p + p^2 - p^3$. Its critical points are the solutions to $f^\prime = 1 + 2p - 3p^2 = 0$. The latter has roots at $p = 1$ and $-1/3$. Clearly $f(1) = 1$, so the forward orbit of $1$ under the two-terminal reliability function is bounded.  What about the forward orbit of $-1/3$? It is easy to check that $f$ has a local minimum of $-5/27$ at $p = 1/3$, and that $f([-1,1]) = [-5/27,1] \subset [-1,1]$. It follows that the forward orbit of $-1/3$ is also bounded, and we conclude that the two-terminal attractor is connected.

The argument for $C_4$ with adjacent terminals is a bit more involved (and it is less certain from Figure~\ref{fig:C4-adjacentTerminalsRoots-Fractal} whether it is connected or not).  The two-terminal reliability of $C_4$ with adjacent terminals is $f = p + p^3 -p^4$, which has critical points $1$, $r_1 = \displaystyle{\frac{-1}{8} + \frac{\sqrt{15}}{8}i}$ and  $r_2 = \displaystyle{\frac{-1}{8} + \frac{\sqrt{15}}{8}i}$. Clearly the forward orbit of $1$ is bounded (it is a fixed point of $f$). To determine whether the forward orbits of the others are bounded, a simple calculation determines that $f^2(r_1))$ (and $f^2(r_2))$ both have modulus $\sqrt{388912639}/65536 < 0.31$. Moreover, if we substitute $p = re^{i\theta}$ into $f$, we find that for $r \leq 1/3$,
\begin{eqnarray}
|f(re^{i\theta})| & = & r^2 \left( -8\cos^3 \theta \cdot r^3 + 4 \cos^2 \theta \cdot r^2+ \cos \theta \cdot (-2r^5+6 r^3) 
\right. \nonumber \\ 
 & & \left. + r^6+r^4-2r^2+1 \right) \nonumber\\ 
 & \leq & r^2 \left( 8r^3 + 4r^2 + (-2r^5 + 6r^3) + r^6
+r^4+2r^2 + 1 \right) \nonumber\\ 
 & = & (r^6-2r^5+r^4+14r^3+6r^2+1)r^2 \label{r}
 \end{eqnarray}
 The function of $r$ in (\ref{r})  is increasing on $[0,1/3]$, with a maximum less than $1/3$, so it follows that the forward orbit of any point with modulus at most $1/3$ (which includes $f^2(r_1)$ and 
$f^2(r_2)$) is bounded, and hence the forward orbits of the critical points $r_1$ and $r_2$ are bounded, and the fractal is connected.

One more example --  let's consider the Julia set for the two-terminal reliability of even cycles $C_{2k}$ with antipodal terminals $s$ and $t$ (i.e. points at maximum distance). The two terminal reliability is $f = 2p^k - p^2k$. As 
\[ f^\prime = 2k p^{k-1} - 2kp^{2k-1} = 2kp^{k-1} \left( 1 - p^{k}\right),\]
the critical points are $0$ and the $k$-th roots of unity. Clearly the forward orbit of $0$ is bounded as it is a fixed point. However, if $\omega$ is any $k$-th root of unity, then 
\[ f(\omega) = 2\omega^k - \omega^{2k} = 2-1 = 1,\]
and as $1$ is a fixed point, the orbits of any such critical point $\omega$ is bounded. Thus the Julia set for the associated two-terminal reliability polynomial is connected  (see Figure~\ref{fig:evencycles-antipodalTerminalsRoots-Fractal}).

\begin{figure}[h!]
    \centering
    \includegraphics[width=3.65in]{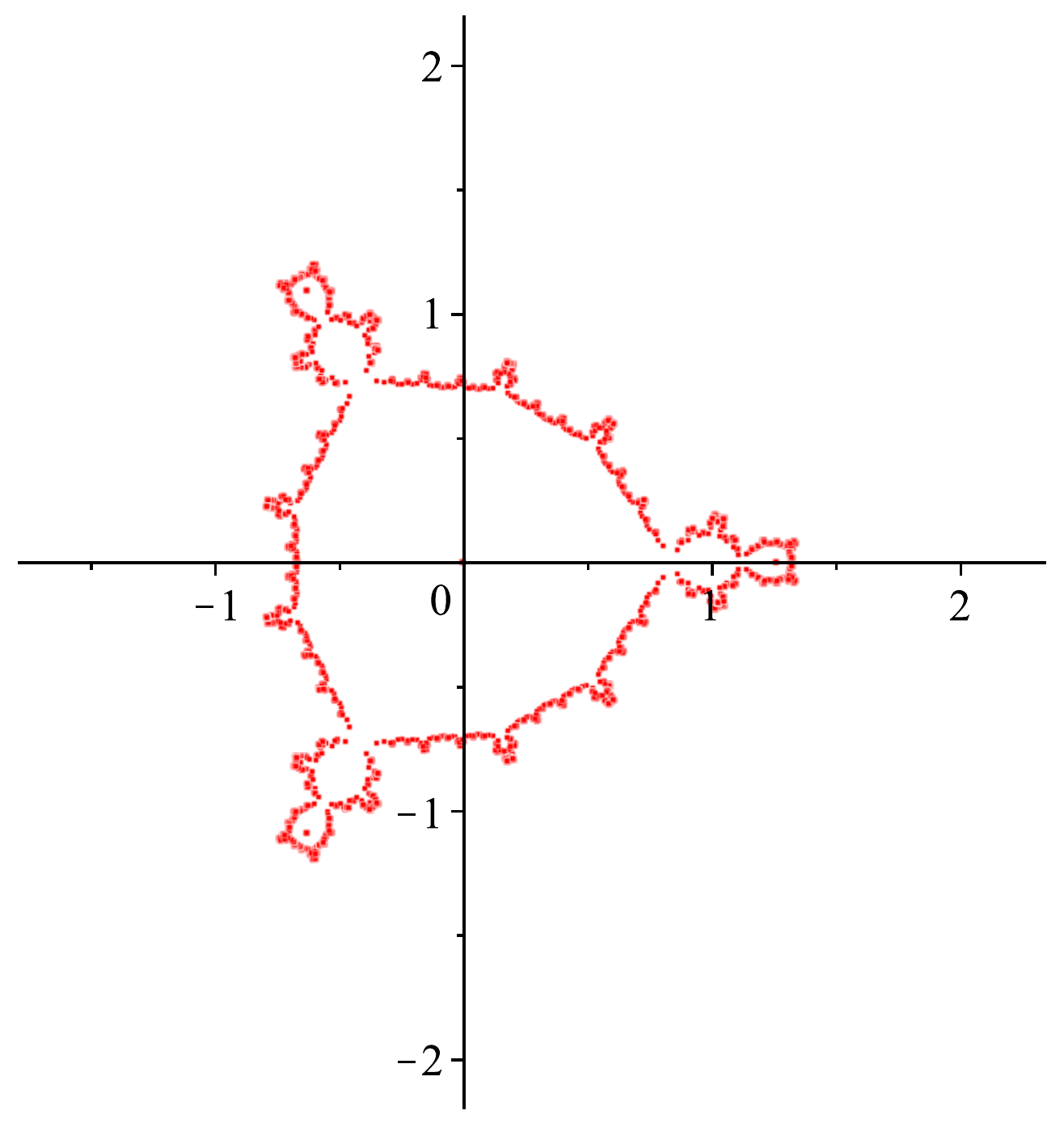}
    \caption{Plot of the two-terminal roots of the fourth repeated gadget replacement of $C_6$ with two antipodal terminals.}
     \label{fig:evencycles-antipodalTerminalsRoots-Fractal}
\end{figure}

One might wonder if the Julia set of every two-terminal reliability polynomial is connected, but the answer is no -- a plot of an approximation to the two-terminal attractor of $K_{2,3}$ (the complete bipartite with cells of cardinality $2$ and $3$), with adjacent terminals $s$ and $t$, is shown in Figure~\ref{fig:evencycles-antipodalTerminalsRoots-Fractal}. Here $f = \TRelp{K_{2,3}}{s}{t}  = p^6-p^5-2p^4+2p^3+p$. The critical points include $r \approx -1.157582493$, and $f^{3}(r) \approx 1.4 \cdot 10^{14}$. It is not hard to check that $|f(z)| > |z|^6-|z|^5-2|z|^4-2|z|^3-|z| \geq |z|^2$ for $|z| > 2.4$, and $|f(r)| > 3.3$, so it follows that the iterates of $f$ at $r$ are unbounded, and hence the associated Julia set (which is the two-terminal attractor of $K_{2,3}$) is disconnected (see Figure~\ref{fig:K23TerminalsRoots-Fractal}).

\begin{figure}[h!]
    \centering
    \includegraphics[width=3.65in]{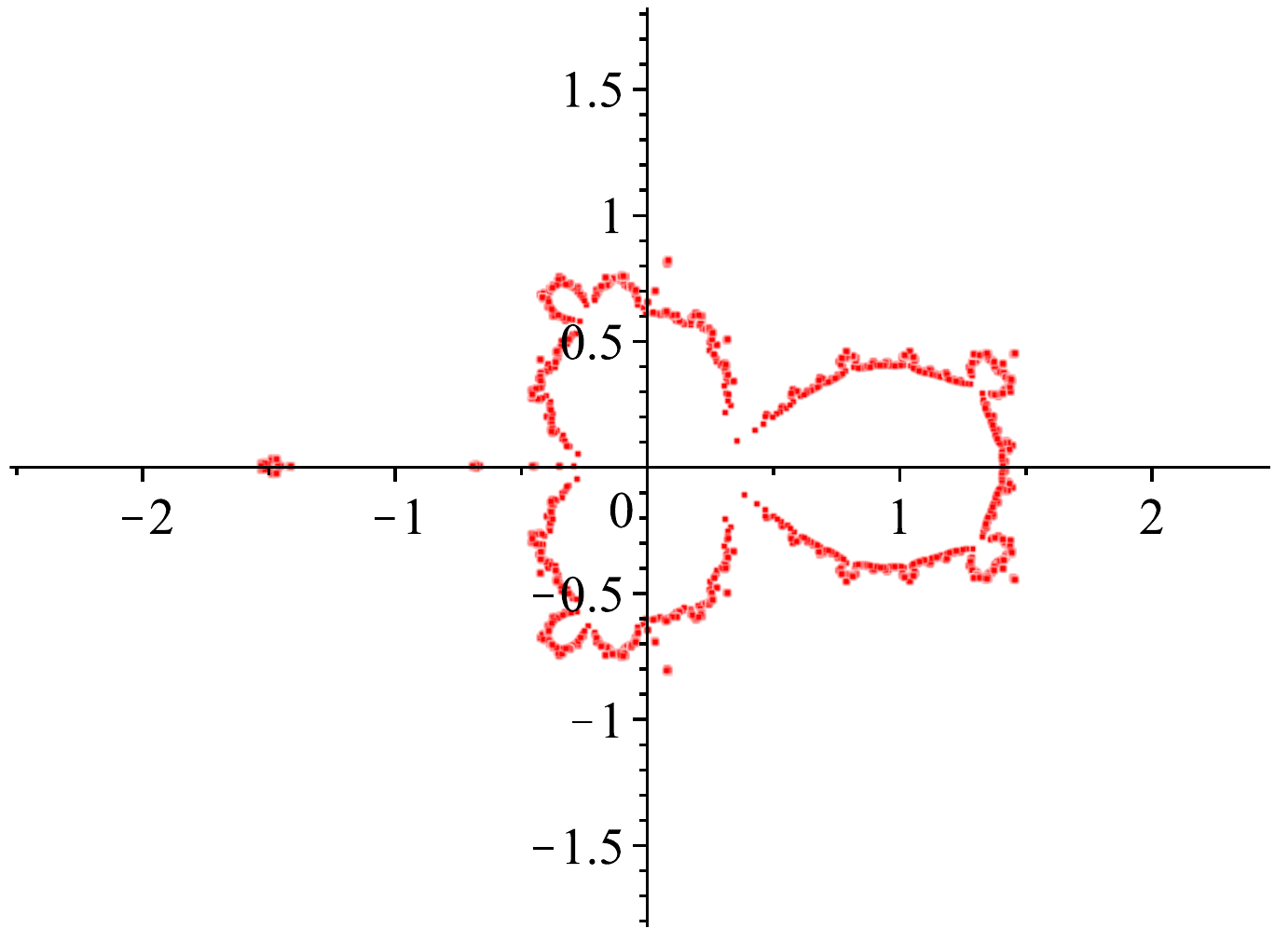}
    \caption{Plot of the two-terminal roots of the fourth repeated gadget replacement of $K_{2,3}$ with two adjacent terminals.}
     \label{fig:K23TerminalsRoots-Fractal}
\end{figure}

\section{Conclusion}

We have seen that, while all-terminal and two-terminal reliabilities have many similarities, their roots behave remarkably differently. We leave with a few easily stated (but intriguing!) open problems.

\begin{prob} 
What is the closure of the {\em real} roots of two terminal reliability polynomials?
\end{prob}

We know that in the all-terminal case, the closure of the real roots is precisely  $\{0\} \cup [1,2]$ (see \cite{brownColbournRoots}).  However, what can be said about two-terminal roots? Unfortunately, we are unable to answer the question.

\begin{prob} 
Are two-terminal roots bounded?
\end{prob}

The two-terminal roots are not as closely related to the disk $|z - 1| \leq 1$ as all-terminal roots seem to be, but we do not have any two-terminal roots of large modulus yet. Can some be located, or is there a large enough disk that contains all two-terminal roots?

\begin{prob} 
When is the two-terminal attractor (or Julia set of the two-terminal reliability, if the terminals are nonadjacent) connected?
\end{prob}

Many of the two-terminal attractors/Julia sets we found were indeed connected, but not all. Are most connected? The problem is likely to be quite difficult, as one needs not only to find the critical points but determine if their forward orbits are bounded. We remark that if one can find a two-terminal reliability polynomial {\em all} of whose critical points have unbounded orbits, then the associated Julia set will indeed be \textbf{totally disconnected} (that is, it will contain nontrivial connected subsets). This seems unlikely, as for most graphs, the two-terminal reliability is flat at $p = 1$, which means that $1$ will be a bounded critical point.

\bibliographystyle{plain}
\bibliography{simple}

\end{document}